\def\tr{\triangleright}
\newtheorem{theorem}{Theorem}
\newtheorem{corollary}[theorem]{Corollary}
\theoremstyle{definition}
\newtheorem{example}{Example}
\newtheorem{definition}{Definition}
\newtheorem{remark}{Remark}
\date{}
\title{\Large \textbf{Quandle Coloring Quivers}}
\author{Karina Cho\footnote{Email: kcho@hmc.edu. Supported by the Giovanni Borrelli Mathematics Fellowship}
\and
Sam Nelson\footnote{Email: Sam.Nelson@cmc.edu. Partially supported by Simons Foundation collaboration grant 316709}}
\begin{document}
\maketitle

\begin{abstract}
We consider a quiver structure on the set of quandle colorings of an oriented
knot or link diagram. This structure contains a wealth of knot and link
invariants and provides a categorification of the quandle counting invariant
in the most literal sense, i.e., giving the set of quandle colorings the
structure of a small category which is unchanged by Reidemeister moves. We
derive some new enhancements of the counting invariant from this quiver
structure and show that the enhancements are proper with explicit examples.
\end{abstract}

\parbox{5.25in}{\textsc{Keywords:} Quandles, Enhancements, Quivers,
Categorified link invariants, In-degree quiver polynomials

\smallskip

\textsc{2010 MSC:} 57M27, 57M25}

\section{\large\textbf{Introduction}}\label{I}

\textit{Quandles} were defined in \cite{J} (see also \cite{M} where
they are called \textit{distributive groupoids}) as an abstract algebraic
structure generalizing the Wirtinger presentation of the knot group, with
axioms encoding the Reidemeister moves. Quandles have
been used to define many useful invariants of oriented links and knots.
In particular, associated to every finite quandle $X$ is the positive
integer-valued \textit{quandle counting invariant} $\Phi_X^{\mathbb{Z}}$
which can be easily computed from a diagram of an oriented knot
or link $L$.

Starting in \cite{CJKLS} and continuing in many recent works (see
\cite{CEGS,EN,GN} etc.), oriented link invariants known as \textit{enhancements}
of the quandle counting invariant have been defined and studied. An enhancement
is a generally stonger invariant which determines the counting invariant. The
original example is the \textit{2-cocycle invariant} where a quandle 2-cocycle
is used to define an invariant of quandle-colored oriented knots and links
known as a \textit{Boltzmann weight}. The multiset of these Boltzmann weights
is then a stronger invariant which has the counting
invariant as its cardinality. For ease of comparison,
the multiset is often written as a polynomial by making the Boltzmann weights
exponents of a formal variable $u$ with multiplicities as coefficients; in this
format we can recover the counting invariant by evaluating the polynomial
at $u=1.$ See \cite{CJKLS,EN} for more.

Related to enhancement is the notion of \textit{categorification}, whose many
recent examples include Khovanov homology in various flavors
(see e.g. \cite{K,OS}) as well as Knot Floer homology (e.g. \cite{M1}) and more.
The general idea is to replace basic structures with richer algebraic
structures to get finer and more sensitive information,  e.g. replacing
elements of a set with objects in a category and operations with morphisms.
In particular, categorified knot and link invariants are examples of 
enhancements of knot and link invariants.

In this paper we introduce the \textit{quandle coloring quiver}, a directed
graph-valued enhancement
of the quandle counting invariant, and derive from it several
simpler enhancements. This enhancement categorifies the quandle counting
invariant in the sense that it endows the set of quandle colorings of a knot
with the structure of a small category in a way which is invariant under
Reidemeister moves.

The paper is organized as follows. In Section \ref{QB} we recall the basics
of quandles and the quandle counting invariant. In Section \ref{QCQ} we define
the quandle coloring quiver and derive from it several enhancements of the
quandle counting invariant. We provide examples which demonstrate that the new
enhancements are proper. We conclude in Section \ref{Q} with some questions for
future work.

\section{\large\textbf{Quandle Basics}}\label{QB}

We begin with some relevant definitions and examples.

\begin{definition}
A set $X$ equipped with a binary operation $\tr$ is a \textit{quandle} if it satisfies
\begin{enumerate}
    \item $x \tr x = x$ for all $x \in X$,
\item for each $y \in X$, the map $f_y: Q \to Q$ defined by $f_y(x) = x \tr y$ is a bijection, and
    \item $(x\vartriangleright y)\vartriangleright z = (x\vartriangleright z)\vartriangleright(y \vartriangleright z)$ for all $x,y,z \in X$.
\end{enumerate}
\end{definition}

\begin{example}\label{ex:conjugation}
Let $G$ be a group where the $\tr$ operation is $n$-fold conjugation: $x \tr y = y^n x y^{-n}$. Then $G$ is a quandle.
\end{example}

\begin{example}\label{ex:dihedral}
Let $X = \mathbb{Z}/n\mathbb{Z}$ where the $\tr$ operation is defined by $x \tr y \equiv 2y - x$ (mod $n$). Then $X$ is a quandle called the \textit{dihedral quandle}.
\end{example}

Finite quandles, which are a focus of this paper, can be fully represented by operation tables. The operation table for the dihedral quandle with $n = 3$ is shown below.

\[\begin{array}{r|rrr}
\tr & 0 & 1 & 2 \\ \hline
  0 & 0 & 2 & 1 \\
  1 & 2 & 1 & 0 \\
  2 & 1 & 0 & 2
\end{array}\]

Next, we give a combinatorial definition of the \textit{fundamental quandle} of a knot or link. Let $L$ be an oriented knot or link with $n$ arcs in its realization. We assign a label to each arc. At each crossing, we assign the relation given by the picture below.
\[\scalebox{0.2}{\includegraphics{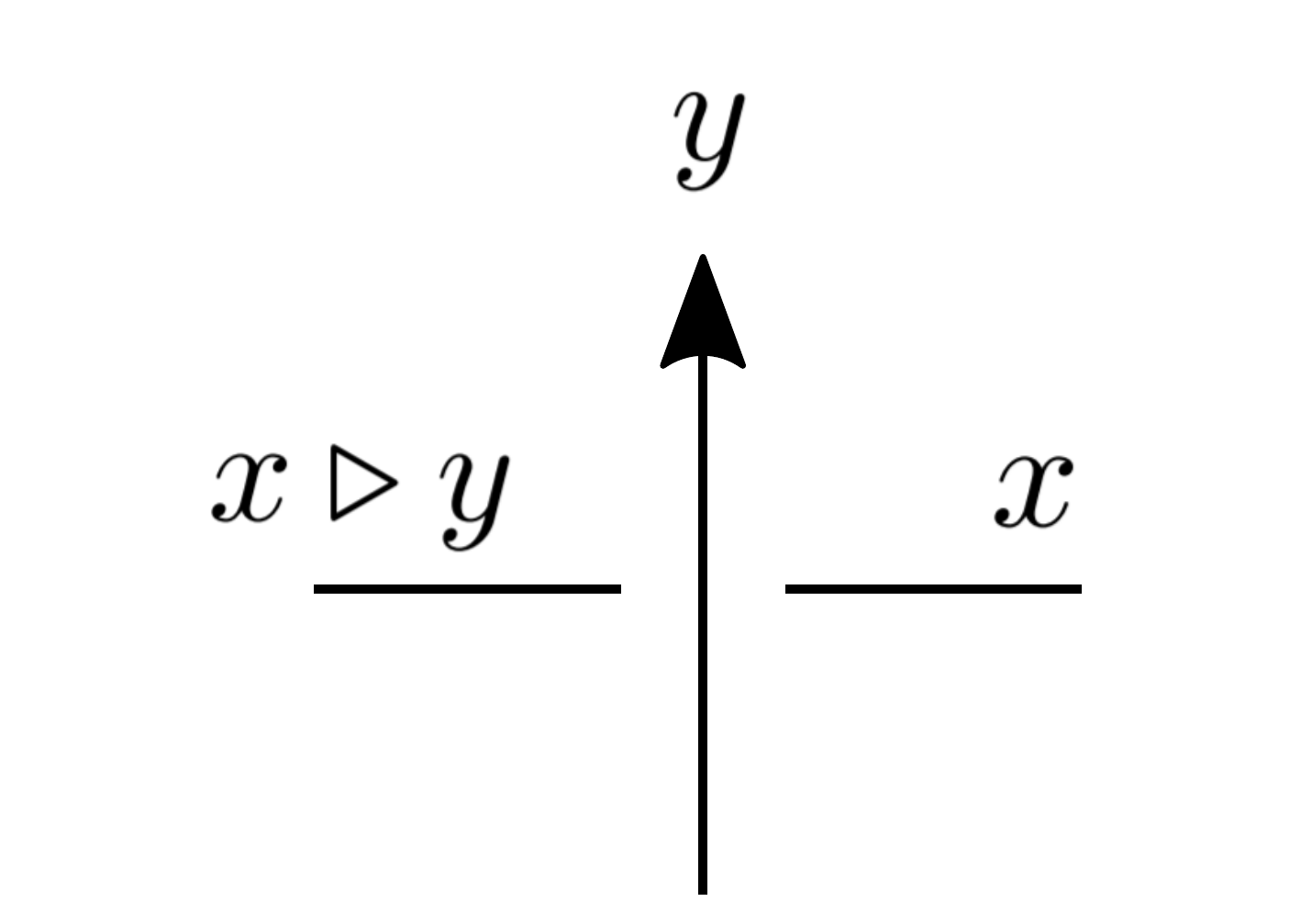}}\]

Note that only the orientation of the overarc matters. The fundamental quandle $Q(L)$ is the set of equivalence classes of quandle words generated by the arc labels under the equivalence relations given by the crossings and the quandle axioms.

\begin{example}\label{ex:fundquandle}
We will compute $Q(3_1)$ for the trefoil knot. Here the arcs are labelled $a,b,c$. \[\scalebox{0.28}{\includegraphics{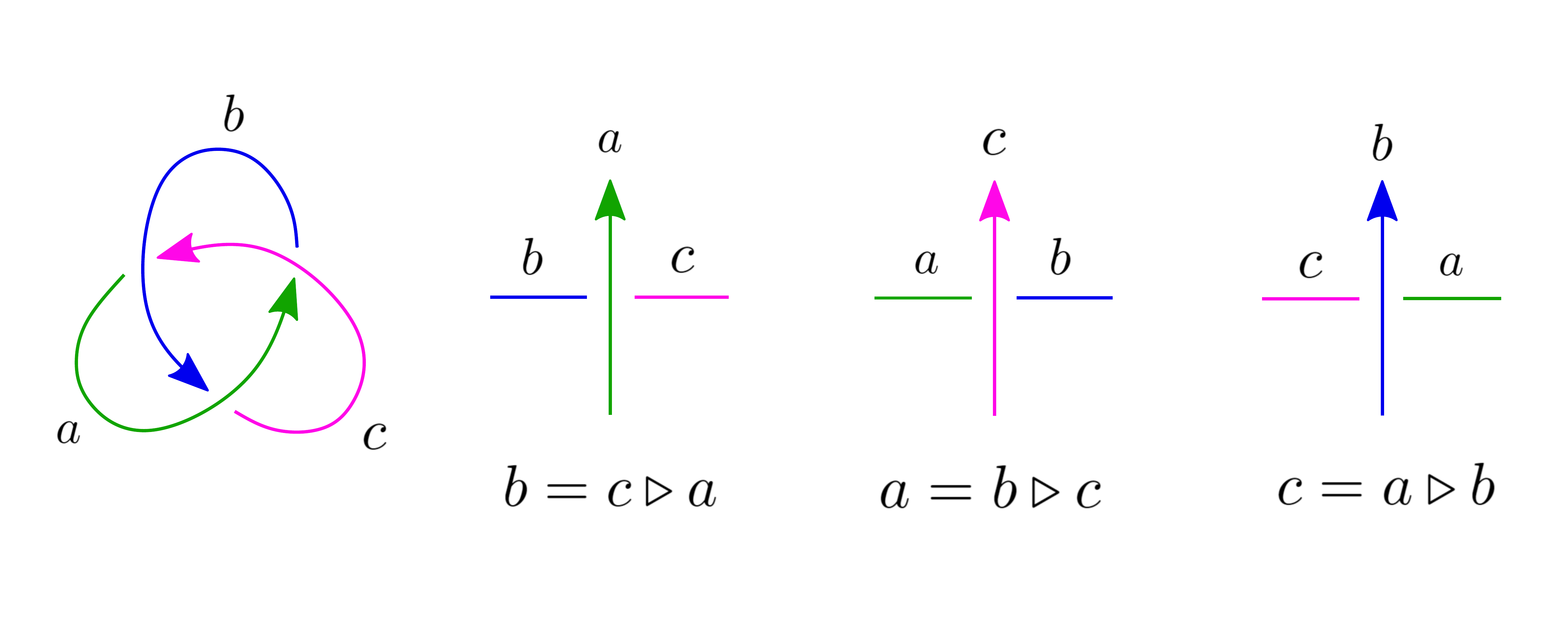}}\]

Then the fundamental quandle has presentation $Q(3_1) = \langle a,b,c \, |  \, b = c \tr a, a = b \tr c, c = a \tr b \rangle$.
In this case, the fundamental quandle is infinite. Note that by construction, any knot or link with finitely many arcs in its diagram will have a finitely generated fundamental quandle.

\end{example}

\begin{definition}
Let $X, Y$ be quandles with multiplication operations indicated by  $\vartriangleright_X$ and $\vartriangleright_Y$ respectively. A map $f:X \to Y$ is a \textit{quandle homomorphism} given that $f(a \vartriangleright_X b) = f(a)\vartriangleright_Y f(b)$ for any $a,b \in X$.
\end{definition}

\begin{definition}
Let $L$ be an oriented knot or link and $X$ a finite quandle called the coloring quandle. The coloring space $\mathrm{Hom}(Q(L),X)$ is the space of quandle homomorphisms from $Q(L) \to X$. The \textit{quandle counting invariant} is the cardinality of the coloring space, $|\mathrm{Hom}(Q(L),X)|$, which we will denote by $\Phi_X^{\mathbb{Z}}(L)$.
\end{definition}

\begin{remark}
Combinatorially, each element $\phi \in \mathrm{Hom}(Q(L),X)$ can be represented as a ``coloring'' of the diagram of $L$ by colors from $X$. Using this analogy, a valid coloring is an assignment of an element from $X$ to each arc in $L$'s link diagram in a way that respects the quandle operation of $X$ at each crossing. This makes sense since each arc is a generator for $Q(L)$ and a homomorphism is uniquely defined by the mapping of generators. Since there are finitely many ways to assign colors from a finite quandle $X$ to a link diagram with $n$ arcs, the quandle counting invariant always yields an integer value. From here onwards we will think of homomorphisms in $\mathrm{Hom}(Q(L),X)$ as colorings of link diagrams.
\end{remark}

\begin{example}\label{ex:coloringsoftrefoil}
We will compute the quandle counting invariant $|\mathrm{Hom}(Q(L),X)|$ where $L$ is the trefoil as orientied in example \ref{ex:fundquandle} and $X$ is the dihedral quandle on 3 elements. Recall that $Q(L)$ has presentation $Q(L) = \langle a,b,c \, |  \, b = c \tr a, a = b \tr c, c = a \tr b \rangle$ and $X$ has the multiplication table shown below.

\[\begin{array}{r|rrr}
\tr & 0 & 1 & 2 \\ \hline
  0 & 0 & 2 & 1 \\
  1 & 2 & 1 & 0 \\
  2 & 1 & 0 & 2
\end{array}\]

To count the homomorphisms, we need to count the different ways we can validly map the generators $a,b,c$ to elements of $X$. As seen in example \ref{ex:fundquandle}, we must satisfy $c = a \tr b$, so once colors for $a$ and $b$ are chosen, the color for $c$ is determined by the multiplication table. There are 3 choices each for $a$ and $b$, so we see that $|\mathrm{Hom}(Q(L),X)| = 3 \cdot 3 = 9$. These 9 colorings are depicted below where 0 is green, 1 is blue, and 2 is pink.

\[\scalebox{0.3}{\includegraphics{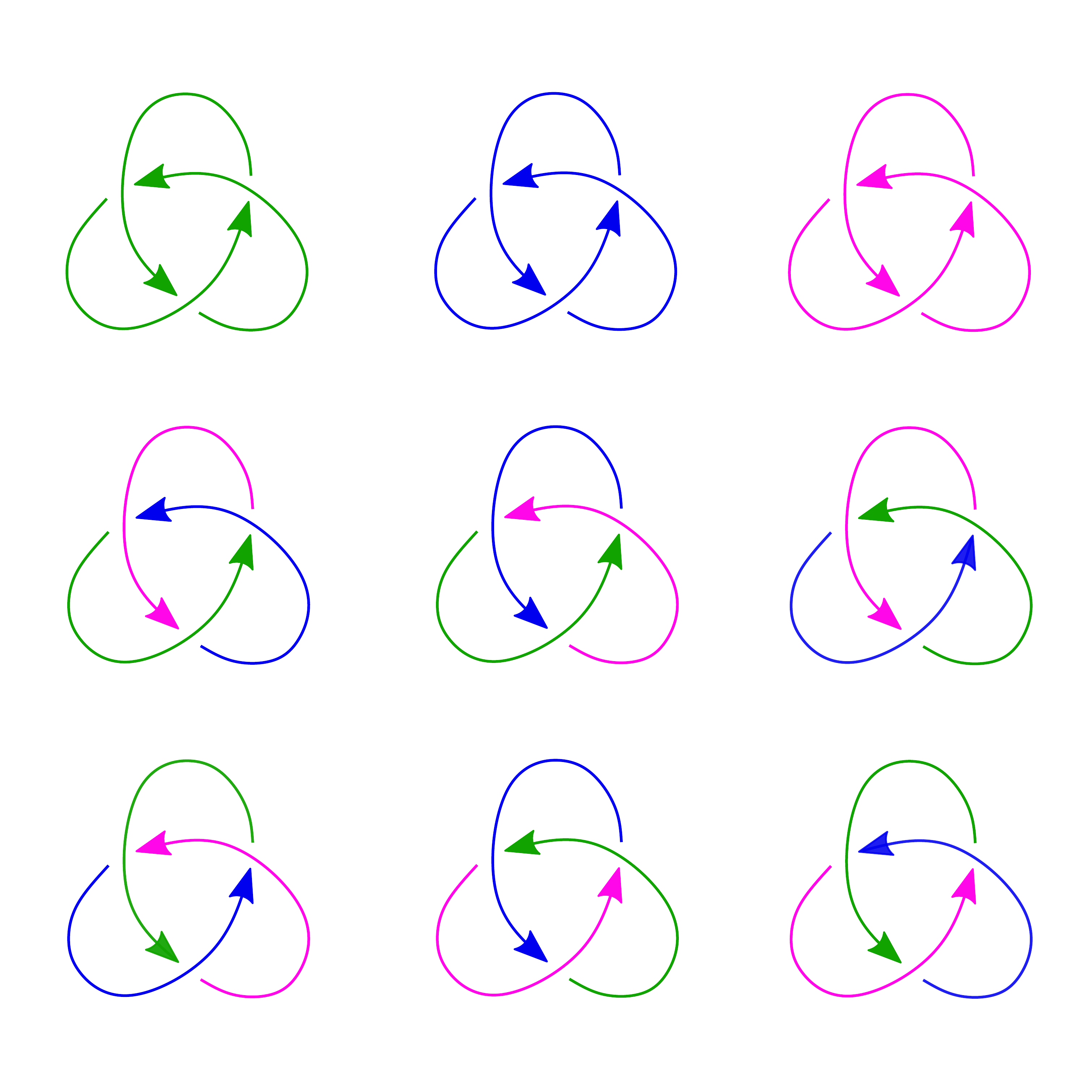}}\]

For example, the coloring in the center of the grid represents the homomorphism that maps $a \mapsto 0,$ $b \mapsto 1, c \mapsto 2$.

\end{example}

\section{\large\textbf{Quandle Coloring Quivers}}\label{QCQ}

In this section we define quivers (i.e., directed graphs possibly including 
loops and bigons)
associated to a pair $(L,X)$ consisting of an oriented link $L$ and finite
quandle $X$.

\begin{definition}
Let $X$ be a finite quandle and $L$ an oriented link. For any
set of quandle endomorphims $S\subset \mathrm{Hom}(X,X)$, the associated
\textit{quandle coloring quiver}, denoted $\mathcal{Q}_X^S(L)$,
is the directed graph with a vertex
for every element $f\in\mathrm{Hom}(Q(L),X)$ and an edge directed
from $f$ to $g$ when $g=\phi f$ for an element $\phi\in S$. Important
special cases include the case $S=\mathrm{Hom}(X,X)$, which we call the
\textit{full quandle coloring quiver} of $L$ with respect to $X$,
denoted $\mathcal{Q}_X(L)$,
and the case when $S=\{\phi\}$  is a singleton, which we
will denote by $\mathcal{Q}_X^{\phi}(L)$.
\end{definition}

The vertices in $\mathcal{Q}_X(L)$ can be identified with quandle homomorphisms
from the fundamental quandle of the link to the coloring quandle $X$ or,
equivalently, with $X$-colorings of a diagram of $L$. Then there is a directed
edge in $\mathcal{Q}_X(L)$ from a vertex $v_1$ to a vertex $v_2$ if there is
a quandle endomorphism $e:X\to X$ such that the diagram of quandle
homomorphisms
\[\includegraphics{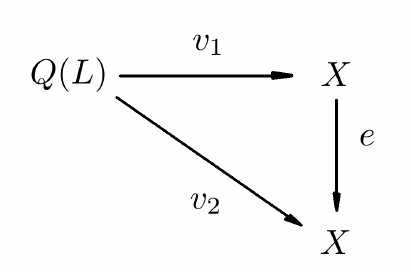}\]
commutes.

\begin{theorem}
Let $X$ be a finite quandle, $S\subset\mathrm{Hom}(X,X)$ and $L$ an oriented
link. Then the  quiver $\mathcal{Q}_X^{S}(L)$ is an invariant of $L$.
\end{theorem}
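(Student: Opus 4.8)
The plan is to show that the quiver $\mathcal{Q}_X^S(L)$ is unchanged under the three Reidemeister moves, since any two diagrams of the same oriented link are related by a finite sequence of such moves. The key observation is that the quiver is built entirely out of two ingredients: the vertex set, which is the coloring space $\mathrm{Hom}(Q(L),X)$, and the edge relation, which is defined purely algebraically by $g = \phi f$ for $\phi \in S$. Since the set $S \subset \mathrm{Hom}(X,X)$ depends only on the coloring quandle $X$ and not on the diagram of $L$, the edges are determined completely once we know the vertex set together with the ambient composition in $\mathrm{Hom}(X,X)$. Therefore the entire argument reduces to producing, for each Reidemeister move, a canonical bijection between the coloring spaces of the two diagrams, and then checking that this bijection is compatible with post-composition by elements of $S$.

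First I would recall the standard fact (really the defining property of the quandle axioms) that the fundamental quandle $Q(L)$ is an invariant of the oriented link $L$ up to isomorphism: the three quandle axioms are precisely engineered so that the Wirtinger-type presentation read off from a diagram is preserved under Reidemeister I, II, and III. Consequently, if $D$ and $D'$ are two diagrams of $L$, there is an isomorphism $Q(D) \cong Q(D')$, and I would make this concrete at the level of colorings: each Reidemeister move induces an explicit bijection $\beta \colon \mathrm{Hom}(Q(D),X) \to \mathrm{Hom}(Q(D'),X)$ on colorings, obtained by precomposing with the quandle isomorphism $Q(D') \cong Q(D)$. This $\beta$ is the candidate graph isomorphism between the vertex sets of $\mathcal{Q}_X^S(D)$ and $\mathcal{Q}_X^S(D')$.

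The main step, and the only thing that actually needs the specific structure of a quiver, is to verify that $\beta$ preserves edges. Concretely, I would show that for colorings $f, g$ of $D$ and any $\phi \in S$, we have $g = \phi f$ as elements of $\mathrm{Hom}(Q(D),X)$ if and only if $\beta(g) = \phi\,\beta(f)$ as elements of $\mathrm{Hom}(Q(D'),X)$. This is immediate once $\beta$ is realized as precomposition with a fixed quandle isomorphism $\psi \colon Q(D') \to Q(D)$, since then $\beta(g) = g\psi = (\phi f)\psi = \phi(f\psi) = \phi\,\beta(f)$, and conversely by applying $\psi^{-1}$; the crucial point is that $\phi$ acts on the target side $X$ while $\psi$ acts on the source side $Q(L)$, so the two compositions do not interfere. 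Thus $\beta$ carries each $\phi$-labeled edge of $\mathcal{Q}_X^S(D)$ to the corresponding $\phi$-labeled edge of $\mathcal{Q}_X^S(D')$, giving an isomorphism of directed graphs.

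I expect the only genuine obstacle to be bookkeeping rather than conceptual: one must confirm that the bijection on colorings induced by each Reidemeister move really is realized by honest precomposition with a quandle isomorphism of fundamental quandles (as opposed to some ad hoc identification), so that the one-line composition argument above applies uniformly. Once the invariance of $Q(L)$ is invoked in this precise precomposition form, the edge-preservation is formal and requires no case analysis of the individual moves. I would therefore organize the proof around the single algebraic identity $(\phi f)\psi = \phi(f\psi)$, treating the Reidemeister-move analysis purely as the source of the isomorphism $\psi$.
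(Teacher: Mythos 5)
Your proposal is correct and takes essentially the same route as the paper: the paper's one-line proof likewise reduces invariance of the quiver to the invariance of the fundamental quandle $Q(L)$, hence of the coloring space $\mathrm{Hom}(Q(L),X)$, since the edge relation depends only on $X$ and $S$. Your identity $(\phi f)\psi = \phi(f\psi)$ merely makes explicit the edge-preservation step that the paper leaves implicit, so your writeup is a fleshed-out version of the same argument rather than a different one.
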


\begin{proof}
The quiver $\mathcal{Q}_X^{S}(L)$ is determined up to isomorphism by $X$ and $\mathrm{Hom}(Q(L),X)$ is an invariant of $L$.
\end{proof}

\begin{corollary}
Any invariant of directed graphs applied to $\mathcal{Q}_X^{S}(L)$ defines an
invariant of oriented links.
\end{corollary}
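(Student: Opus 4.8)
The plan is to recognize this statement as a purely formal composition of two invariance facts, so that essentially all of the work has already been done in the preceding Theorem and the only task is to unpack definitions. First I would make precise what is meant by an \emph{invariant of directed graphs}: this is an assignment $I$ sending a directed graph to some value (a number, a polynomial, an isomorphism type, etc.) in such a way that isomorphic directed graphs receive equal values, i.e. $\Gamma \cong \Gamma'$ implies $I(\Gamma) = I(\Gamma')$. The examples one has in mind are quantities like the number of vertices, the number of edges, in-degree distributions, and so on; the defining feature is that $I$ is constant on isomorphism classes of directed graphs.

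Next I would invoke the preceding Theorem, whose precise content (as its proof makes clear) is that the isomorphism class of $\mathcal{Q}_X^S(L)$ depends only on the oriented link $L$, not on the chosen diagram. Equivalently, if $L$ and $L'$ are equivalent oriented links, then $\mathcal{Q}_X^S(L)$ and $\mathcal{Q}_X^S(L')$ are isomorphic as directed graphs, since the vertex set $\mathrm{Hom}(Q(L),X)$ is a link invariant and the edge set is determined from $S$ acting by post-composition.

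The final step is the composition argument itself. Given any graph invariant $I$, I would define the assignment $L \mapsto I(\mathcal{Q}_X^S(L))$. If $L$ and $L'$ are equivalent, then by the Theorem $\mathcal{Q}_X^S(L) \cong \mathcal{Q}_X^S(L')$, and because $I$ is constant on graph isomorphism classes we get $I(\mathcal{Q}_X^S(L)) = I(\mathcal{Q}_X^S(L'))$. Hence the composite is unchanged under Reidemeister moves and is a well-defined invariant of oriented links.

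I do not expect a genuine obstacle here; the one subtlety worth flagging is that the Theorem only guarantees invariance \emph{up to directed-graph isomorphism}, so it is essential that $I$ be a bona fide isomorphism invariant rather than a quantity depending on a particular vertex labeling of $\mathcal{Q}_X^S(L)$. As long as $I$ respects directed-graph isomorphism, it threads cleanly through the composition and the conclusion follows.
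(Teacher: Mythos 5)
Your proposal is correct and matches the paper's (implicit) argument exactly: the paper treats this corollary as an immediate consequence of the preceding Theorem, which establishes that the isomorphism class of $\mathcal{Q}_X^{S}(L)$ depends only on $L$, so composing with any isomorphism invariant of directed graphs yields a link invariant. Your one flagged subtlety --- that the invariant must respect directed-graph isomorphism rather than a particular vertex labeling --- is precisely the point the paper's proof of the Theorem relies on, so there is no gap.
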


\begin{remark}
The full quandle counting quiver forms a category with vertices as objects and
directed paths modulo loops as morphisms; hence 
$\mathcal{Q}_X(L)$ is a categorification of the quandle counting invariant
$\mathrm{Hom}(Q(L),X)$. The decategorification map is the forgetful functor
from the quiver to set of vertices, $V(\mathcal{Q}_X(L))=\mathrm{Hom}(Q(L),X)$.
\end{remark}

\begin{example}\label{ex:1}
The Hopf link $L2a1$ has five colorings by the quandle $X$ specified by
the operation table as shown below.
\[\begin{array}{r|rrr}
\tr & 1 & 2 & 3 \\ \hline
  1 & 1 & 1 & 2 \\
  2 & 2 & 2 & 1 \\
  3 & 3 & 3 & 3
\end{array}\]
Then for example the quandle homomorphism $\phi_1:X\to X$ defined by
$\phi_1(1)=2,$ $\phi_1(2)=1,$ and $\phi_1(3)=3$ yields the following quandle
coloring quiver $\mathcal{Q}_X^{\phi_1}(L2a1)$
\[\scalebox{1}{\includegraphics{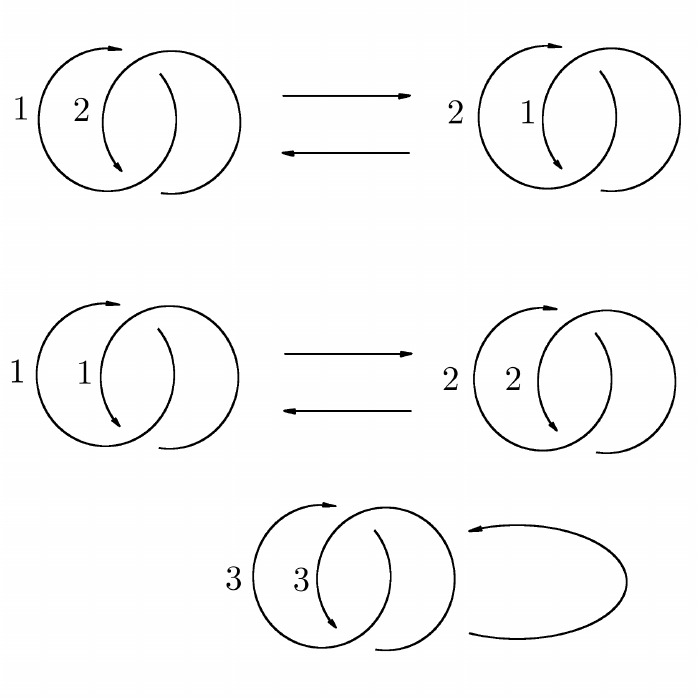}}
\quad \raisebox{1.25in}{or}\quad
\raisebox{0.5in}{\includegraphics{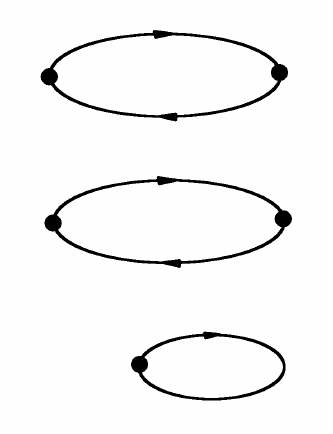}}\]
while the quandle homomorphism $\phi_2:X\to X$ defined by
$\phi_2(1)=\phi_2(2)=\phi_2(3)=3$ yields
$\mathcal{Q}_X^{\phi_2}(L2a1)$
\[\scalebox{1}{\includegraphics{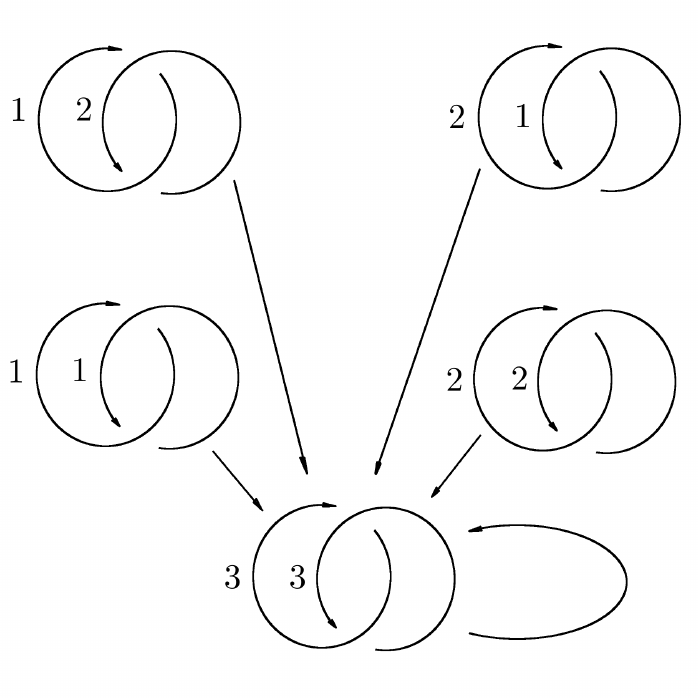}}
\quad \raisebox{1.25in}{or}\quad
\raisebox{0.5in}{\includegraphics{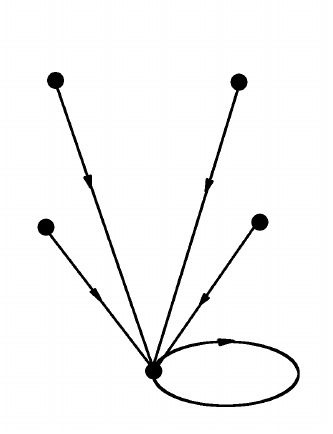}}.\]
Setting $S=\{\phi_3,\phi_4\}$ where $\phi_3(1)=\phi_3(2)=1$ and $\phi_3(3)=2$
and $\phi_4(1)=\phi_4(2)=2$ and $\phi_4(3)=1$ yields the quiver
$\mathcal{Q}_X^{S}(L2a1)$
\[\scalebox{1}{\includegraphics{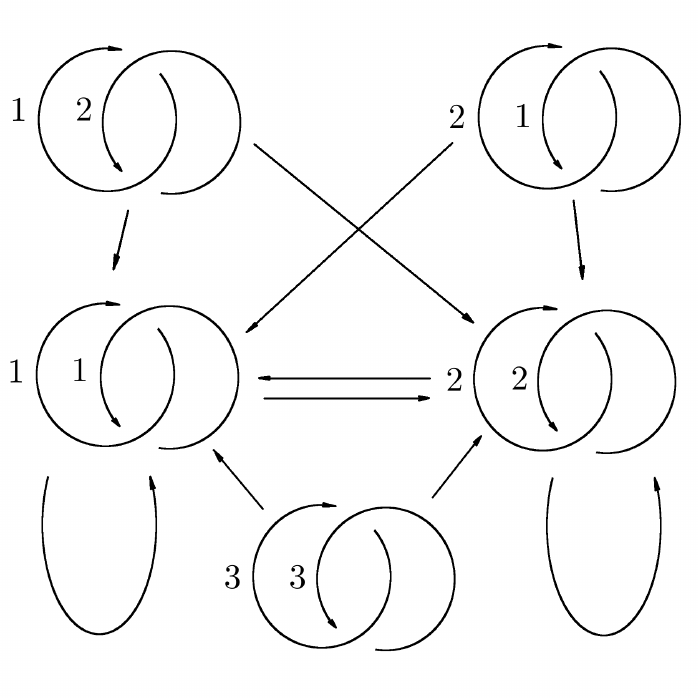}}
\quad \raisebox{1.25in}{or}\quad
\raisebox{0.5in}{\includegraphics{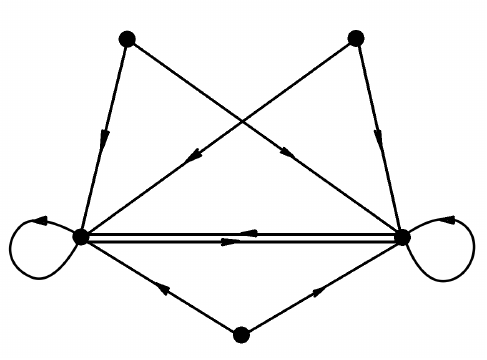}}.\]
\end{example}

\begin{remark}
Example \ref{ex:1} inspires a simple integer-valued invariant: the number of
components of $\mathcal{Q}_X^{S}(L)$. More generally, the ranks of
$H_0(\mathcal{Q}_X^{S}(L))$ and $H_1(\mathcal{Q}_X^{S}(L))$ are
integer-valued oriented link invariants.
\end{remark}

\begin{example}
Let $X$ be the quandle given by the operation table
\[\begin{array}{r|rrrr}
\tr & 1 & 2 & 3 & 4 \\ \hline
  1 & 1 & 3 & 1 & 3 \\
  2 & 4 & 2 & 4 & 2 \\
  3 & 3 & 1 & 3 & 1 \\
  4 & 2 & 4 & 2 & 4
\end{array}\] and let $\phi:X\to X$
be the endomorphism given by $\phi(1)=4$, $\phi(1)=2$, $\phi(1)=4$
and $\phi(1)=2$. Then the links $L6a1$ and $L6a5$
\[\includegraphics{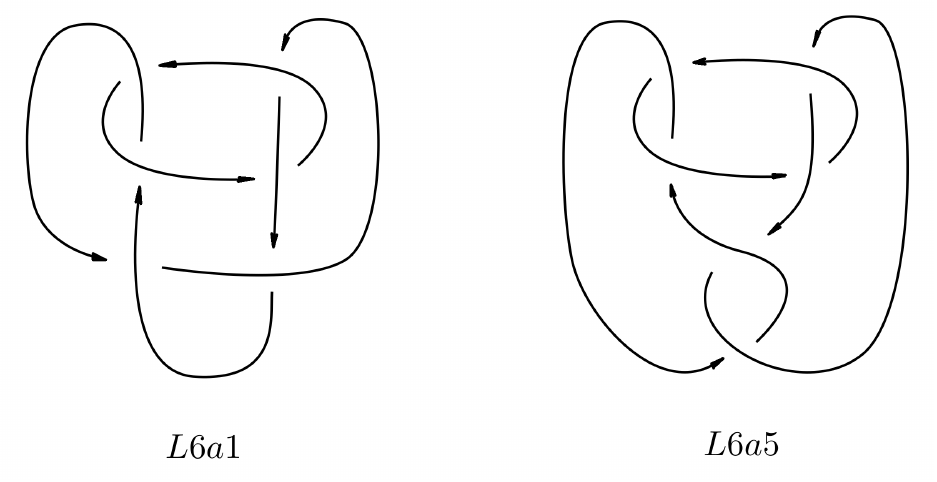}\]
 both have
$\Phi_X^{\mathbb{Z}}(L6a1)=\Phi_X^{\mathbb{Z}}(L6a5)=16$
$X$-colorings and hence are not distinguished by the quandle counting
invariant defined by $X$; however, the quandle coloring quivers
are non-isomorphic, distinguishing the links:
\[\begin{array}{cc}
\includegraphics{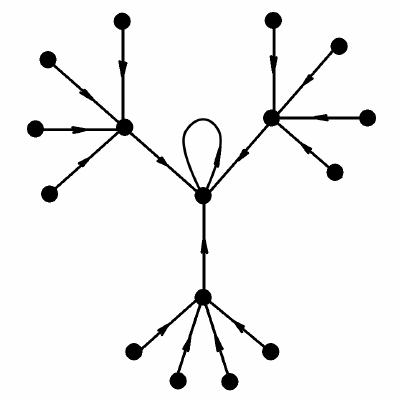} & \includegraphics{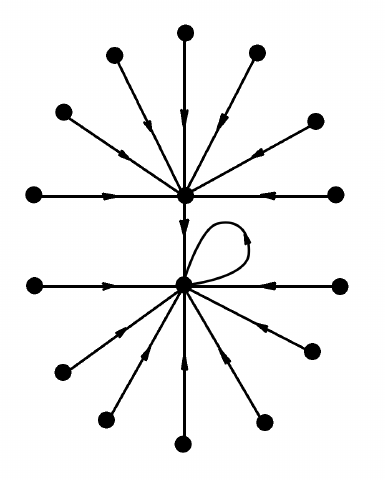} \\
\mathcal{Q}_X^{\phi}(L6a1) & \mathcal{Q}_X^{\phi}(L6a5)
\end{array}\]
In particular, this example demonstrates that $\mathcal{Q}_X^{\phi}(L)$
is a proper enhancement of the quandle counting invariant.
\end{example}

The full quandle coloring quiver has many multiple edges, so for simplicity
we can draw single directed edges labeled with multplicities.

\begin{example}\label{ex:2}
The full quandle counting quiver for the link and quandle in
Example \ref{ex:1} is
\[\includegraphics{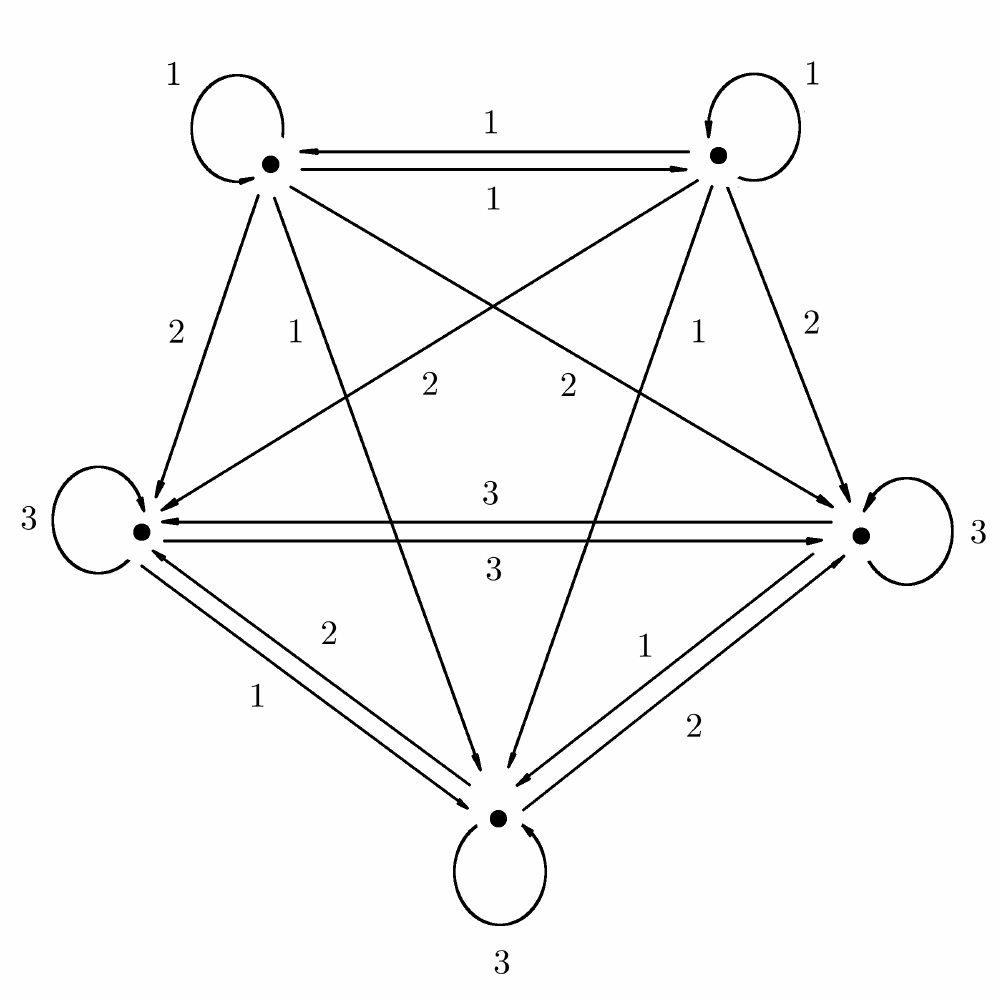}.\]
\end{example}

We observe that in a quandle coloring quiver, the out-degree
$\mathrm{deg}^-(f)$ of every vertex is the same, namely $|S|$;
however, different
vertices may have different in-degrees $\mathrm{deg}^+(f)$. This information
can be encoded as a polynomial knot invariant:

\begin{definition}
Let $X$ be a finite quandle, $S\subset\mathrm{Hom}(X,X)$ a set of quandle
endomorphisms, $L$ an oriented link and $\mathcal{Q}_X^S(L)$
the associated quandle coloring quiver with set of
vertices $V(\mathcal{Q}_X^S(L))$. Then the
\textit{in-degree quiver polynomial} of $L$ with respect to $X$ is
\[\Phi_X^{\mathrm{deg}^+,S}(L)
=\sum_{f\in V(\mathcal{Q}_X^{S}(L))} u^{\mathrm{deg}^+(f)}.\]
If $S=\{\phi\}$ is a singleton we will write $\Phi_X^{\mathrm{deg}^+,S}(L)$
as $\Phi_X^{\mathrm{deg}^+,\phi}(L)$ and if $S=\mathrm{Hom}(X,X)$ we will write
$\Phi_X^{\mathrm{deg}^+,S}(L)$ as $\Phi_X^{\mathrm{deg}^+}(L)$.
\end{definition}

\begin{example}
In Example \ref{ex:1}, the Hopf link has in-degree quiver polynomial
$\Phi_X^{\mathrm{deg}^+}(L2a1)=2u^2+u^7+2u^{12}$ with respect to the quandle
$X$, and in-degree quiver polynomials
$\Phi_X^{\mathrm{deg}^+,\phi_1}(L2a1)=5u$,
$\Phi_X^{\mathrm{deg}^+,\phi_2}(L2a1)=4+u^5$
and
$\Phi_X^{\mathrm{deg}^+,S}(L2a1)=2u^5+3$
with respect to the quandle homomorphisms $\phi_1$ and $\phi_2$ and
subset $S=\{\phi_3,\phi_4\}$.
\end{example}

The following example shows that in-degree quiver polynomials are proper
enhancements of the quandle counting invariant.

\begin{example}
Let $X$ be the quandle with operation table below. The reader can verify that
the map $\phi:X\to X$ defined by setting $\phi(1)=\phi(2)=\phi(3)=6$,
$\phi(4)=4$ and $\phi(5)=\phi(6)=5$ is a quandle endomorphism.
\[\begin{array}{r|rrrrrr}
\tr & 1 & 2 & 3 & 4 & 5 & 6 \\ \hline
1 & 1 & 3 & 2 & 1 & 3 & 2 \\
2 & 3 & 2 & 1 & 2 & 1 & 3 \\
3 & 2 & 1 & 3 & 3 & 2 & 1 \\
4 & 4 & 4 & 4 & 4 & 4 & 4 \\
5 & 6 & 6 & 6 & 5 & 5 & 5 \\
6 & 5 & 5 & 5 & 6 & 6 & 6
\end{array}\]
Then the links $L7a3$ and $L7a4$
\[\includegraphics{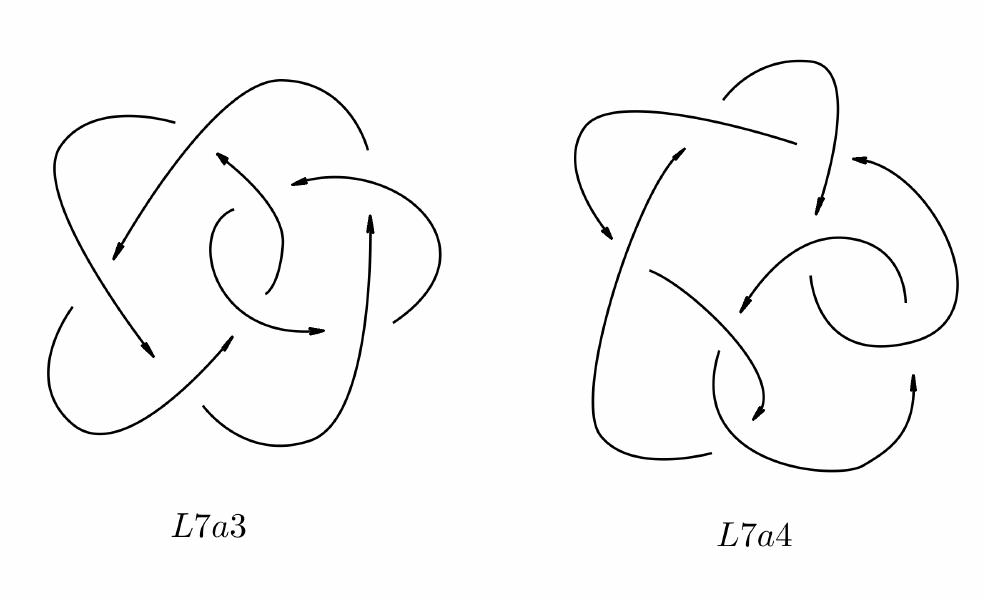}\]
 have equal quandle counting invariant
values $\Phi_X^{\mathbb{Z}}(L7a3)=\Phi_X^{\mathbb{Z}}(L7a4)=30$
but are distinguished by the in-degree quandle quiver polynomials
\begin{eqnarray*}
\Phi_X^{\mathrm{deg^+,\phi}}(L7a3) & = &
22+u+2u^2+2u^3+u^4+u^6+u^9
%u^9 + u^6 + u^4 + 2u^3 + 2u^2 + u + 22
 \ne \\
\Phi_X^{\mathrm{deg}^+,\phi}(L7a4) & = &
21+u+2u^2+3u^3+u^4+2u^6.
%2u^6 + u^4 + 3u^3 + 2u^2 + u + 21
\end{eqnarray*}
\end{example}

\begin{example}
Let $X$ be the quandle with operation table
\[\begin{array}{r|rrrrrrrr}
\tr & 1 & 2 & 3 & 4& 5 & 6 & 7 & 8 \\ \hline
1 & 1 &4 &2 &3 &3 &2 &1 &4 \\
2 & 3 &2 &4 &1 &4 &1 &2 &3 \\
3 & 4 &1 &3 &2 &1 &4 &3 &2 \\
4 & 2 &3 &1 &4 &2 &3 &4 &1 \\
5 & 8 &8 &8 &8 &5 &5 &5 &5 \\
6 & 5 &5 &5 &5 &6 &6 &6 &6 \\
7 & 7 &7 &7 &7 &7 &7 &7 &7 \\
8 & 6 &6 &6 &6 &8 &8 &8 &8
\end{array}\]
and let $\phi:X\to X$ be given by
$\phi(1)=\phi(2)=\phi(3)=\phi(4)=7$, $\phi(5)=\phi(6)=\phi(8)=5$
and $\phi(7)=6$. Then we compute via our \texttt{python} code the
following $\Phi_X^{\mathrm{deg}^+,\phi}$ values on the prime links of up to
seven crossings from \cite{KA}.
\[\begin{array}{r|l}
L & \Phi_X^{\mathrm{deg}^+,\phi}(L) \\ \hline
L2a1 & u^9 + 3u^4 + 2u^3 + u + 21 \\
L4a1 & u^{16} + u^9 + 2u^4 + 2u^3 + u + 33 \\
L5a1 & 2u^{12} + u^9 + 3u^4 + 2u^3 + u + 43 \\
L6a1 & u^9 + 3u^4 + 2u^3 + u + 21 \\
L6a2 & 2u^{12} + u^9 + 3u^4 + 2u^3 + u + 43 \\
L6a3 & u^{16} + 2u^{12} + u^9 + 2u^4 + 2u^3 + u + 55 \\
L6a4 & u^{27} + 3u^{12} + 3u^9 + 7u^4 + 3u^3 + u + 110 \\
L6a5 & u^{27} + 3u^9 + 7u^4 + 3u^3 + u + 77 \\
L6n1 & u^{27} + u^{16} + 3u^9 + 6u^4 + 3u^3 + u + 89 \\
L7a1 & u^{16} + 2u^{12} + u^9 + 2u^4 + 2u^3 + u + 55 \\
L7a2 & u^{16} + u^9 + 2u^4 + 2u^3 + u + 33 \\
L7a3 & u^{16} + u^{12} + u^9 + 2u^4 + 2u^3 + u + 44 \\
L7a4 & u^{16} + 2u^{12} + u^9 + 2u^4 + 2u^3 + u + 55 \\
L7a5 & u^9 + 3u^4 + 2u^3 + u + 21 \\
L7a6 & u^9 + 3u^4 + 2u^3 + u + 21 \\
L7a7 & u^{27} + u^{16} + 2u^{12} + 3u^9 + 6u^4 + 3u^3 + u + 111 \\
L7n1 & u^{16} + u^9 + 2u^4 + 2u^3 + u + 33 \\
L7n2 & u^{16} + u^{12} + u^9 + 2u^4 + 2u^3 + u + 44.
\end{array}\]

\end{example}

The in-degree quiver polynomial is good not just for multi-component links
but can distinguish prime knots as well.

\begin{example}
Let $X$ be the quandle given by the operation table below.
\[\begin{array}{r|rrrrrrrrr}
\tr & 1 & 2 & 3 & 4 & 5 & 6& 7 & 8 & 9 \\ \hline
1 & 1 &3 &2 &7 &9 &8 &4 &6 &5 \\
2 & 3 &2 &1 &9 &8 &7 &6 &5 &4 \\
3 & 2 &1 &3 &8 &7 &9 &5 &4 &6 \\
4 & 8 &7 &9 &4 &6 &5 &2 &1 &3 \\
5 & 7 &9 &8 &6 &5 &4 &1 &3 &2 \\
6 & 9 &8 &7 &5 &4 &6 &3 &2 &1 \\
7 & 6 &5 &4 &3 &2 &1 &7 &9 &8 \\
8 & 5 &4 &6 &2 &1 &3 &9 &8 &7 \\
9 & 4 &6 &5 &1 &3 &2 &8 &7 &9
\end{array}\]
The prime knots $8_{10}$ and $8_{18}$
\[\includegraphics{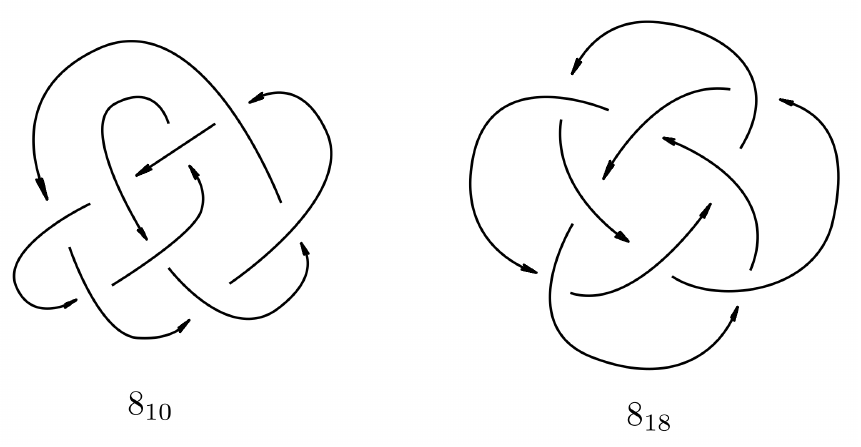}\]
are then distinguished according to our \texttt{python} computations
by the in-degree full quiver polynomials
\[
\Phi_X^{\mathrm{deg}^+}(8_{10})=9u^{189} + 18u^{108} + 54u^{54}
\ne
\Phi_X^{\mathrm{deg}^+}(8_{18})=9u^{297} + 72u^{54}
\]
as well as several of the in-degree quiver polynomials
associated to various endomorphisms, e.g.
\[
\Phi_X^{\mathrm{deg}^+,\phi}(8_{10})=9u^{9} + 72
\ne
\Phi_X^{\mathrm{deg}^+,\phi}(8_{18})=3u^{27} + 78
\]
for the quandle endomorphism defined by
$\phi(1)=\phi(2)=\phi(3)=9$, $\phi(4)=\phi(5)=\phi(6)=7$ and
$\phi(7)=\phi(8)=\phi(9)=8$.
\end{example}

\section{\large\textbf{Questions}}\label{Q}

We end with some questions for future research.

Many more enhancements of $\Phi_X^{\mathbb{Z}}$ can be defined from
$\mathcal{Q}_X$; we have considered only some of the most basic. Other
examples include the eigenvalues and characteristic polynomial of the
adjacency matrix of $\mathcal{Q}_X$, topological invariants such as
number of components or rank of first homology, the Tutte polynomial of
the graph and much more. How are these enhancements related to other invariants?

What enhancements can be derived functorially from the categorical structure of
$\mathcal{Q}_X$, e.g. via representations into vector space or module
categories?

If $Y\subset X$ is a subquandle, what is the precise relationship between 
the invariants derivable from $\mathcal{Q}_Y(L)$ and those derivable from
 $\mathcal{Q}_X(L)$?

\bibliography{kc-sn}{}
\bibliographystyle{abbrv}

\bigskip

\noindent
\textsc{Department of Mathematics \\
Harvey Mudd College\\
301 Platt Boulevard \\
Claremont, CA 91711
}

\bigskip

\noindent
\textsc{Department of Mathematical Sciences \\
Claremont McKenna College \\
850 Columbia Ave. \\
Claremont, CA 91711}

\end{document}